\theoremstyle{theorem}
\newtheorem{theorem}{Theorem}
\newtheorem{lem}[theorem]{Lemma}
\newtheorem{cor}[theorem]{Corollary}
\newtheorem*{theov}{Vague Theorem}
\theoremstyle{definition}
\newcommand{\bZ}{\mathbb{Z}}
\newcommand{\bQ}{\mathbb{Q}}
\newcommand{\Qbar}{\overline{\mathbb{Q}}}
\newcommand{\bC}{\mathbb{C}}
\newcommand{\cF}{\mathcal{F}}
\newcommand{\cG}{\mathcal{G}}
\newcommand{\kI}{\mathfrak{I}}
\newcommand{\Alg}{\mathrm{Alg}_{\overline{\mathbb{Q}}}}
\newcommand{\cE}{\mathcal{E}}
\newcommand{\Span}{\mathrm{span}}
\newcommand{\ev}{\mathrm{ev}}
\newcommand{\degtr}{\mathrm{degtr}}
\newcommand{\rmht}{\mathrm{ht}}
\DeclareMathOperator{\lcm}{lcm}
\author{B. Adamczewski}
\address{
Universit\'e Claude Bernard Lyon 1, CNRS, \'Ecole Centrale de Lyon, INSA Lyon, Universit\'e Jean Monnet, ICJ UMR5208, 69622 Villeurbanne, France.}
\email{Boris.Adamczewski@math.cnrs.fr}
\author{\'E. Delaygue}
\address{
Universit\'e Claude Bernard Lyon 1, CNRS, \'Ecole Centrale de Lyon, INSA Lyon, Universit\'e Jean Monnet, ICJ UMR5208, 69622 Villeurbanne, France.}
\email{delaygue@math.univ-lyon1.fr}
\title{Algebraic relations between sine and cosine values}
\date{}
\thanks{This project has received funding from the ANR project De Rerum Natura (ANR-19-CE40-0018).} 
\subjclass{11J81, 11J85}
\keywords{Transcendence, algebraic independence, Lindemann--Weierstrass theorem, trigonometric functions, Siegel $E$-functions}
\begin{document}

\maketitle

\maketitle

\begin{abstract}
The aim of this note is to show that any algebraic relation over $\Qbar$ between the values of the trigonometric functions sine and cosine at algebraic points can be derived from the Pythagorean identity and the angle addition formulas. This result is obtained as a consequence of the Lindemann--Weierstrass theorem. 
\end{abstract}

\section{Introduction.}

In their trigonometry course, all high school students learn the Pythagorean identity
\begin{equation}\label{eq: pyt}
\cos^2\alpha +\sin^2\alpha =1 \,,
\end{equation}
as well as the angle addition formulas
\begin{equation}\label{eq: add}
\begin{cases}
\cos(\alpha+\beta)=\cos\alpha\cos\beta -\sin\alpha\sin\beta\,,\\
 \sin(\alpha+\beta)=\sin\alpha\cos\beta+\cos\alpha\sin\beta\,.
\end{cases}
\end{equation}
These fundamental identities hold for \emph{all} complex numbers $\alpha$ and $\beta$. Of course, they can be used recursively to produce various polynomial relations between the values of the trigonometric functions sine and cosine, such as 
$$
\cos(2\alpha)\cos^2(\alpha)+ 4 \sin(2\alpha)\sin \left(\frac{\alpha}{2}\right)\cos \left(\frac{\alpha}{2}\right)\cos(\alpha)-\sin^2(\alpha)\cos(2\alpha) =1\,,
$$
which is valid for all complex numbers $\alpha$. The aim of this note is to study the following question: 

\medskip

\emph{Is it true that any polynomial relation with algebraic coefficients between sine and cosine values can be derived from the fundamental geometric identities \eqref{eq: pyt} and \eqref{eq: add}?}

\medskip

In general, the answer is no. Indeed, there are some sporadic relations such as
$$
\sin(\pi)=0 \quad \mbox{ and }\quad \cos^2 \left(\frac{\pi}{5}\right) - \frac{1}{2} \cos\left(\frac{\pi}{5}\right) -\frac{1}{4}=0 \,,
$$
that cannot be obtained in this way. The reason is that, if they could, they would remain valid by replacing the transcendental numbers $\pi$ and $\pi/5$ respectively by any complex number $\alpha$, which is obviously not the case. 
However, as the following result shows, the answer to our question turns out to be positive if we restrict ourselves to the values of sine and cosine at \emph{algebraic points}. In the sequel, we fix an embedding of the field of algebraic numbers $\Qbar$ into $\bC$. 

\begin{theov}\label{theo: main0}
Any algebraic relation over $\Qbar$ between the values of sine and cosine at algebraic points can be derived from the Pythagorean identity and the 
angle addition formulas. 
\end{theov}

Since the expression ``can be derived from the Pythagorean identity and the 
angle addition formulas" is somewhat imprecise, our first task is to formalize our \emph{vague theorem} into a precise one which expresses the very same idea (cf.\ Theorem \ref{theo: main}). 
In order to do that, we introduce the $\Qbar$-algebra $\mathcal T$ 
formed by all polynomial expressions in algebraic numbers and the values of sine and cosine at algebraic points, that is  
$$
\mathcal T := \Qbar[\cos(\alpha),\sin(\alpha) : \alpha\in\Qbar]\subseteq \mathbb C\,.
$$
Then we introduce the ring of polynomials with algebraic coefficients in countably many variables $X_\alpha$, $Y_\alpha$, $\alpha\in\Qbar$:
$$
\mathcal A:=\Qbar[(X_\alpha, Y_\alpha)_{\alpha\in\Qbar}] \,.
$$
We recall that each element of $\mathcal A$ is a polynomial with algebraic coefficients in only finitely many of the variables $X_\alpha$ and $Y_\alpha$. 
The ring $\mathcal A$ is also a $\Qbar$-algebra. Let $\ev$ denote the evaluation map from $\mathcal A$ to $\mathcal T$ defined by 
$$
\ev(X_\alpha)=\cos(\alpha)\quad\textup{and}\quad \ev(Y_\alpha)=\sin(\alpha)\,,  \quad \alpha\in\Qbar\,.
$$
This defines a surjective (but non-injective) homomorphism of $\Qbar$-algebras from $\mathcal A$ to $\mathcal T$. Let $\kI$ denote the ideal of $\mathcal A$ spanned by the polynomials 
\begin{equation}\label{eq: I generators}
X_\alpha^2+Y_\alpha^2-1,\quad X_{\alpha+\beta}-X_\alpha X_\beta +Y_\alpha Y_\beta\,, \quad\textup{and}\quad Y_{\alpha+\beta}-Y_\alpha X_\beta-X_\alpha Y_\beta\,,
\end{equation}
where $\alpha$ and $\beta$ run along $\Qbar$.  The identities \eqref{eq: pyt} and \eqref{eq: add} imply that $\ev(\kI)=\{0\}$. Hence 
the map $\ev$ allows us to define a 
homomorphism of $\Qbar$-algebras $\overline{\ev}$ from the quotient algebra $\mathcal A/\kI$ to $\mathcal T$. Now, our vague theorem can be properly formalized as follows. 

\begin{theorem}\label{theo: main}
The map $\overline{\ev}: \mathcal A/\kI\rightarrow \mathcal T$ is an isomorphism. 
\end{theorem}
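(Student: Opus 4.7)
The map $\overline{\ev}$ is surjective because $\ev$ is, so only injectivity needs to be proved: given $P\in\mathcal A$ with $\ev(P)=0$, I want to show $P\in\kI$. Writing $P$ in terms of a finite set of variables $X_{\alpha_1},Y_{\alpha_1},\dots,X_{\alpha_n},Y_{\alpha_n}$, the plan is (i) to use the relations in $\kI$ to rewrite $P$ modulo $\kI$ in terms of variables attached to a $\bQ$-linearly independent family $\gamma_1,\dots,\gamma_r\in\Qbar$ such that each $\alpha_j$ is an \emph{integer} combination of the $\gamma_k$, and (ii) to apply the Lindemann--Weierstrass theorem to the numbers $i\gamma_1,\dots,i\gamma_r$.

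Such a family $\gamma_1,\dots,\gamma_r$ is produced by starting from any $\bQ$-basis of $\Span_\bQ(\alpha_1,\dots,\alpha_n)$ and rescaling by a common denominator. A short calculation combining the three types of generators of $\kI$ yields the ``expected'' identities $X_0\equiv 1$, $Y_0\equiv 0$, $X_{-\alpha}\equiv X_\alpha$ and $Y_{-\alpha}\equiv -Y_\alpha$ modulo $\kI$. (For instance, specialising the addition formulas at $\alpha=\beta=0$ and combining with $X_0^2+Y_0^2\equiv1$ gives both $(2X_0+1)(X_0-1)\equiv0$ and $Y_0(2X_0-1)\equiv0$ modulo $\kI$, from which $X_0\equiv 1$ and $Y_0\equiv 0$ follow after one further manipulation.) Iterating the angle addition formulas modulo $\kI$ then expresses every $X_{\alpha_j},Y_{\alpha_j}$ as a polynomial in $X_{\gamma_1},Y_{\gamma_1},\dots,X_{\gamma_r},Y_{\gamma_r}$, and using $Y_{\gamma_k}^2\equiv 1-X_{\gamma_k}^2$ to reduce the powers of each $Y_{\gamma_k}$ to degree at most $1$ produces a representative
$$
Q=\sum_{\epsilon\in\{0,1\}^r}P_\epsilon(X_{\gamma_1},\dots,X_{\gamma_r})\prod_{k=1}^r Y_{\gamma_k}^{\epsilon_k},\qquad P\equiv Q\pmod{\kI},
$$
with $P_\epsilon\in\Qbar[T_1,\dots,T_r]$.

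Setting $u_k:=e^{i\gamma_k}$, so that $\cos\gamma_k=(u_k+u_k^{-1})/2$ and $\sin\gamma_k=(u_k-u_k^{-1})/(2i)$, a direct verification shows that the substitution $X_k\mapsto(u_k+u_k^{-1})/2$, $Y_k\mapsto(u_k-u_k^{-1})/(2i)$ induces an isomorphism of $\Qbar$-algebras
$$
\Qbar[X_1,Y_1,\dots,X_r,Y_r]\big/\bigl(X_k^2+Y_k^2-1\,:\,1\le k\le r\bigr)\;\xrightarrow{\;\sim\;}\;\Qbar[u_1^{\pm 1},\dots,u_r^{\pm 1}],
$$
whose inverse sends $u_k\mapsto X_k+iY_k$. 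Since the algebraic numbers $i\gamma_1,\dots,i\gamma_r$ are $\bQ$-linearly independent, the Lindemann--Weierstrass theorem guarantees that $u_1,\dots,u_r$ are algebraically independent over $\Qbar$, so that the $\Qbar$-algebra $\Qbar[u_1^{\pm 1},\dots,u_r^{\pm 1}]$ embeds into $\bC$ via $u_k\mapsto e^{i\gamma_k}$. Composing, the hypothesis $\ev(P)=\ev(Q)=0$ forces each $P_\epsilon$ to be the zero polynomial, whence $P\equiv 0\pmod{\kI}$.

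The genuine work lies in the normal-form step: choosing $\gamma_1,\dots,\gamma_r$ so that each $\alpha_j$ is an \emph{integer} (not merely rational) combination of them, and checking that the somewhat asymmetric generators of $\kI$ nevertheless suffice to eliminate every variable $X_\alpha,Y_\alpha$ with $\alpha\notin\{\gamma_1,\dots,\gamma_r\}$, since $\kI$ explicitly mentions only sums and contains no built-in identities for $X_0$ or $Y_0$. Once these bookkeeping reductions are in place, the Lindemann--Weierstrass step is essentially a one-line algebraic-independence argument and is the only non-elementary input of the proof.
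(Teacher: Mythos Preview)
Your proof is correct and shares the paper's overall architecture: reduce modulo $\kI$ to variables indexed by a $\bQ$-linearly independent family $\gamma_1,\dots,\gamma_r$ (after first extracting from $\kI$ the relations $X_0\equiv1$, $Y_0\equiv0$, $X_{-\alpha}\equiv X_\alpha$, $Y_{-\alpha}\equiv -Y_\alpha$), and then invoke Lindemann--Weierstrass. The difference lies in how you handle the linearly independent case. The paper proves its Corollary~\ref{cor: lin ind case} by a commutative-algebra argument: it bounds $\rmht(\Alg(\cE))\le n$ via Krull dimension and transcendence degree, and then exhibits a strictly ascending chain of prime ideals $\{0\}\subsetneq\mathfrak p_1\subsetneq\cdots\subsetneq\mathfrak p_n\subseteq\Alg(\cE)$, the primality of each $\mathfrak p_k$ being established by an induction showing that $R[X,Y]/(X^2+Y^2-1)$ is a domain whenever $R$ is. You instead write down the explicit isomorphism
\[
\Qbar[X_1,Y_1,\dots,X_r,Y_r]\big/\bigl(X_k^2+Y_k^2-1\bigr)\;\xrightarrow{\;\sim\;}\;\Qbar[u_1^{\pm1},\dots,u_r^{\pm1}],\qquad u_k\mapsto X_k+iY_k,
\]
and observe that Lindemann--Weierstrass makes the evaluation $u_k\mapsto e^{i\gamma_k}$ an embedding of the Laurent ring into $\bC$. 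This is more direct and bypasses the dimension/height machinery entirely; conceptually it amounts to recognising the unit circle as $\mathbb G_m$ over $\Qbar$. The paper's route, by contrast, keeps the argument within real-looking coordinates and packages the primality claim as a reusable lemma, but at the cost of invoking more commutative algebra than is strictly necessary here.
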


Although it is deduced quite directly from the famous Lindemann--Weierstrass theorem using some basic notions of commutative algebra, we were unable to find this result in the literature. We felt this statement was interesting enough for a broad audience of mathematicians to write this note. We will also draw a parallel with some of the deeper conjectures and results that permeate transcendental number theory at the end of this note.

\section{Proof of Theorem 1.}


\subsection{Reflection formulas and angle difference identities.}

Beyond the Pythagorean identity and the angle addition formulas, there are other classical algebraic relations between the values of sine and cosine that come to mind. For example, one can think of the initial values $\sin(0)=0$ and $\cos(0)=1$, the reflection formulas $\cos(-\alpha)=\cos(\alpha)$ and $\sin(-\alpha)=-\sin(\alpha)$, and  the angle difference identities 
$$
\begin{cases}
\cos(\alpha-\beta)=\cos(\alpha)\cos(\beta)+\sin(\alpha)\sin(\beta)\,,\\ 
\sin(\alpha-\beta)=\sin(\alpha)\cos(\beta)-\cos(\alpha)\sin(\beta)\,.
\end{cases}
$$
The following lemma shows that all of them belong to $\kI$.

\begin{lem}\label{lem: parity}
For every $\alpha$ and $\beta$ in $\Qbar$, the polynomials 
$$
\begin{cases}
X_{\alpha} -  X_{-\alpha}\,,\\
Y_{\alpha}+Y_{-\alpha}\,,\\ 
X_{\alpha-\beta}-X_\alpha X_\beta-Y_\alpha Y_\beta\,,\\
Y_{\alpha-\beta}-Y_\alpha X_\beta+X_\alpha Y_\beta\,, 
\end{cases}
$$
as well as the polynomials $Y_0$ and $X_0-1$, 
all belong to $\kI$. 
\end{lem}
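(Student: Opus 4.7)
The plan is to derive the six families of identities in a logical order that mirrors how one would argue by hand: first the initial values $X_0 \equiv 1$ and $Y_0 \equiv 0$ modulo $\kI$, then the reflection (parity) formulas, and finally the angle difference formulas. The key technical trick throughout is to combine the two addition-formula generators (for $X$ and $Y$) with the Pythagorean generator $X_\gamma^2+Y_\gamma^2-1$ at a well-chosen parameter $\gamma$.

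For the initial values, I would specialize the addition formulas at $\alpha = 0$, which yields, for every $\beta\in\Qbar$, the two relations $X_\beta(1-X_0)+Y_0 Y_\beta\in\kI$ and $Y_\beta(1-X_0)-Y_0 X_\beta \in\kI$. Multiplying the first by $X_\beta$, the second by $Y_\beta$, and adding, the $Y_0$ terms cancel and the Pythagorean identity collapses $X_\beta^2+Y_\beta^2$ to $1$, producing $1-X_0\in\kI$. Plugging $X_0\equiv 1$ back, for instance in the second relation with $\beta = 0$, then gives $Y_0 \in \kI$.

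Next, the parity relations come from specializing the addition formulas at $\beta=-\alpha$ and using $X_0\equiv 1$, $Y_0\equiv 0$, which yields $1 - X_\alpha X_{-\alpha} + Y_\alpha Y_{-\alpha} \in \kI$ and $Y_\alpha X_{-\alpha} + X_\alpha Y_{-\alpha} \in \kI$. Multiplying the first by $X_\alpha$ and the second by $Y_\alpha$ and adding eliminates $Y_{-\alpha}$, while the Pythagorean relation $X_\alpha^2+Y_\alpha^2\equiv 1$ produces $X_{-\alpha}-X_\alpha\in\kI$. A symmetric combination (multiply by $Y_\alpha$ and $X_\alpha$, then subtract) gives $Y_\alpha+Y_{-\alpha}\in\kI$.

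Finally, the angle difference identities follow immediately: applying the addition-formula generators with $-\beta$ in place of $\beta$ and substituting $X_{-\beta}\equiv X_\beta$, $Y_{-\beta}\equiv -Y_\beta$ modulo $\kI$ yields exactly the two desired polynomials. The only subtlety to flag is the first step: the $\alpha=\beta=0$ case of the generators alone yields merely $(2X_0+1)(X_0-1)\in\kI$, so one genuinely needs the Pythagorean generator at a \emph{free} parameter $\beta$ to cancel the spurious factor $2X_0+1$ and isolate $X_0-1$.
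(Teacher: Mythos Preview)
Your argument is correct and complete, modulo a small sign slip in the parity step: with $(A)\;1 - X_\alpha X_{-\alpha} + Y_\alpha Y_{-\alpha}\in\kI$ and $(B)\;Y_\alpha X_{-\alpha} + X_\alpha Y_{-\alpha}\in\kI$, it is $X_\alpha\cdot(A)-Y_\alpha\cdot(B)$ (not the sum) that eliminates $Y_{-\alpha}$ and yields $X_\alpha-X_{-\alpha}\in\kI$, and $Y_\alpha\cdot(A)+X_\alpha\cdot(B)$ (not the difference) that gives $Y_\alpha+Y_{-\alpha}\in\kI$. The third step is identical to the paper's.

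The main methodological difference is in the treatment of $X_0-1$ and $Y_0$. The paper works \emph{only} with the three generators at $\alpha=\beta=0$, packages the Pythagorean and cosine-addition relations as the $2\times 2$ system
\[
\begin{pmatrix} X_0 & Y_0 \\ X_0 & -Y_0 \end{pmatrix}\begin{pmatrix} X_0 \\ Y_0 \end{pmatrix}\equiv\begin{pmatrix} 1 \\ X_0 \end{pmatrix},
\]
multiplies by the adjugate, and uses $-2X_0Y_0\equiv -Y_0$ (from the sine-addition relation) to obtain $Y_0\equiv 0$ directly; then $X_0\equiv X_0^2\equiv 1$ follows. You instead specialize the addition generators at $\alpha=0$ with a \emph{free} $\beta$ and use the Pythagorean relation $X_\beta^2+Y_\beta^2\equiv 1$ to collapse $(X_\beta^2+Y_\beta^2)(1-X_0)$ to $1-X_0$. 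Both are clean; yours is arguably more transparent, while the paper's is more self-contained in the variables $X_0,Y_0$.

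Your closing remark, however, is not accurate: the $\alpha=\beta=0$ generators alone \emph{do} suffice. Concretely, one checks that $Y_0 = -Y_0\,(X_0^2+Y_0^2-1) + Y_0\,(X_0-X_0^2+Y_0^2) - X_0\,(Y_0-2X_0Y_0)$, so $Y_0$ already lies in the ideal generated by the three polynomials $P_1,P_2,P_3$ obtained at $\alpha=\beta=0$, and then $X_0-1=P_1+P_2-2Y_0^2$ follows. So the ``spurious factor'' $2X_0+1$ can be killed without introducing an auxiliary parameter; it is exactly what the paper's adjugate manipulation accomplishes.
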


\begin{proof}
Throughout this proof, the congruences hold modulo $\kI$. We first infer from \eqref{eq: I generators} with $\alpha=\beta=0$ that 
\begin{equation}\label{eq: ref eq}
X_0^2+Y_0^2\equiv 1,\quad X_0\equiv X_0^2-Y_0^2\quad\textup{and}\quad Y_0\equiv 2X_0Y_0\,.
\end{equation}
We obtain the system
\begin{equation}\label{eq: system X_0 Y_0}
\left(\begin{array}{cc}
  X_0 & Y_0 \\
  X_0 & -Y_0  
\end{array}\right)
\left(\begin{array}{c}
X_0 \\
Y_0
\end{array}\right)
\equiv
\left(\begin{array}{c}
1\\ X_0
\end{array}\right),
\end{equation} 
with determinant $-2X_0 Y_0\equiv -Y_0$. Left multiplying System \eqref{eq: system X_0 Y_0} by its adjugate matrix, we obtain that
$$
-Y_0\left(\begin{array}{c}
X_0\\ Y_0
\end{array}\right)
\equiv
\left(\begin{array}{cc}
  -Y_0 & -Y_0 \\
  -X_0 & X_0  
\end{array}\right)
\left(\begin{array}{c}
1 \\
X_0
\end{array}\right)
\equiv
\left(\begin{array}{c}
-Y_0-Y_0X_0 \\
-X_0+X_0^2
\end{array}\right).
$$ 
 As a consequence, we obtain that $Y_0\equiv 0$. By \eqref{eq: ref eq}, it follows that $X_0^2\equiv 1$ and $X_0\equiv X_0^2\equiv 1$. Hence both $Y_0$ and $X_0-1$ belong to $\kI$. 
 
 Let $\alpha\in \Qbar$. Using the angle addition formulas with $\beta=-\alpha$, we obtain that
$$
1\equiv X_\alpha X_{-\alpha}-Y_\alpha Y_{-\alpha}\quad\textup{and}\quad 0\equiv Y_\alpha X_{-\alpha}+X_\alpha Y_{-\alpha}\,, 
$$
that is
$$
\left(\begin{array}{cc}
    X_{-\alpha} & -Y_{-\alpha} \\
    Y_{-\alpha} & X_{-\alpha}
\end{array}\right)\left(\begin{array}{c}
     X_\alpha  \\
      Y_\alpha
\end{array}\right)\equiv\left(\begin{array}{c}
     1  \\
      0
\end{array}\right)\,.
$$
By the Pythagorean identity, this linear system is invertible and we have
$$
\left(\begin{array}{c}
     X_\alpha  \\
      Y_\alpha
\end{array}\right)\equiv 
\left(\begin{array}{cc}
    X_{-\alpha} & Y_{-\alpha} \\
    -Y_{-\alpha} & X_{-\alpha}
\end{array}\right)
\left(\begin{array}{c}
     1  \\
      0
\end{array}\right)\equiv
\left(\begin{array}{c}
     X_{-\alpha}  \\
      -Y_{-\alpha}
\end{array}\right),
$$
which gives that $X_\alpha-X_{-\alpha}$ and $Y_{\alpha}+Y_{-\alpha}$ belong to $\kI$. Then, using the angle addition formulas, we deduce that the polynomials 
$$X_{\alpha-\beta}-X_\alpha X_\beta-Y_\alpha Y_\beta \quad \mbox{ and } \quad 
Y_{\alpha-\beta}-Y_\alpha X_\beta+X_\alpha Y_\beta\,, \quad\quad \alpha,\beta\in\Qbar\,,$$
also belong to $\kI$, as expected. 
\end{proof}

\subsection{The Lindemann--Weierstrass theorem and basic commutative algebra.}

One of the gems of transcendental number theory is the Lindemann-Weierstrass theorem.

\begin{theorem}[Lindemann-Weierstrass]
Let $\alpha_1,\ldots,\alpha_n$ be algebraic numbers that are linearly independent over $\mathbb Q$. Then the complex numbers 
$e^{\alpha_1},\ldots,e^{\alpha_n}$ are algebraically independent over $\Qbar$. 
\end{theorem}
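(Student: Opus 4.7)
The plan is to prove Lindemann--Weierstrass by Hermite's method of auxiliary polynomials, as extended by Lindemann and Weierstrass. The argument is by contradiction and proceeds in four stages: a reduction to a linear statement, a Galois symmetrization, the construction of a one-parameter family of auxiliary polynomials, and a pair of opposing arithmetic and analytic estimates.

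First, I would reduce the theorem to the apparently weaker statement that if $\gamma_1,\ldots,\gamma_m$ are distinct algebraic numbers, then $e^{\gamma_1},\ldots,e^{\gamma_m}$ are linearly independent over $\Qbar$. Any nontrivial relation $P(e^{\alpha_1},\ldots,e^{\alpha_n})=0$ with $P=\sum_{\mathbf k}c_{\mathbf k}T^{\mathbf k}\in\Qbar[T_1,\ldots,T_n]$ rewrites as $\sum_{\mathbf k}c_{\mathbf k}e^{k_1\alpha_1+\cdots+k_n\alpha_n}=0$, and the $\mathbb Q$-linear independence of the $\alpha_i$ ensures that the exponents are pairwise distinct algebraic numbers, so a failure of algebraic independence produces a failure of linear independence. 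Given an offending linear relation $\sum_i\beta_ie^{\gamma_i}=0$, I would then symmetrize by grouping coefficients along Galois orbits of the $\gamma_i$, multiplying the conjugate relations together, and applying standard symmetric function arguments to obtain a new relation
$$
c_0+\sum_{j=1}^{n}c_je^{\gamma_j}=0,
$$
in which $c_0,c_1,\ldots,c_n$ are rational integers with $c_0\neq 0$, the $\gamma_j$ are pairwise distinct nonzero algebraic numbers, the set $\{\gamma_1,\ldots,\gamma_n\}$ is a union of full Galois orbits over $\mathbb Q$, and $\ell\gamma_j$ is an algebraic integer for a common positive integer $\ell$.

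The heart of the proof is the analytic construction. For a large prime $p$, set
$$
f(x):=\frac{\ell^{np}\,x^{p-1}\prod_{j=1}^{n}(x-\gamma_j)^{p}}{(p-1)!},\qquad F(x):=\sum_{k\geq 0}f^{(k)}(x).
$$
Hermite's identity $\tfrac{d}{dx}(e^{-x}F(x))=-e^{-x}f(x)$ integrates to $F(\gamma_j)=e^{\gamma_j}F(0)-e^{\gamma_j}\int_0^{\gamma_j}e^{-x}f(x)\,dx$. Multiplying by $c_j$, summing over $j$, and collapsing the $F(0)$ contribution via the linear relation produces
$$
c_0F(0)+\sum_{j=1}^{n}c_jF(\gamma_j)=-\sum_{j=1}^{n}c_je^{\gamma_j}\int_0^{\gamma_j}e^{-x}f(x)\,dx.
$$
The argument closes by proving the left-hand side is a nonzero rational integer while the right-hand side tends to $0$ as $p\to\infty$. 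Trivial estimates on the integrand give a right-hand side of order $O(C^p/(p-1)!)$, which vanishes in the limit. For the arithmetic side, the $p$-fold vanishing of $f$ at each nonzero $\gamma_j$ combined with the $(p-1)!$ denominator forces every $f^{(k)}(\gamma_j)$ to be an algebraic integer divisible by $p$, and summing over full Galois orbits turns $\sum_j c_jF(\gamma_j)$ into a rational integer divisible by $p$; the analysis at $0$ similarly shows $c_0F(0)$ is a rational integer divisible by $p$ except for the single surviving term $c_0f^{(p-1)}(0)=\pm c_0(\ell\gamma_1\cdots\ell\gamma_n)^{p}$, whose value is itself a nonzero rational integer (by the orbit condition) and therefore coprime to $p$ for all but finitely many $p$.

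The main obstacle is exactly this $p$-adic bookkeeping: one must verify that $f^{(k)}(\gamma_j)$ is an algebraic integer divisible by $p$ for every relevant pair $(j,k)$, isolate the one contribution (at $j=0$, $k=p-1$) that escapes divisibility, and confirm that after the Galois averaging of the preceding stage everything actually lies in $\mathbb Z$. Once this ledger balances, the contradiction is immediate from comparing an integer of absolute value at least $1$ to a quantity going to $0$.
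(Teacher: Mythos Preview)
The paper does not prove the Lindemann--Weierstrass theorem at all; it is quoted as a classical result and used as a black box in the proof of Corollary~\ref{cor: lin ind case}. So there is no ``paper's own proof'' to compare your proposal against.

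That said, your outline is the standard Hermite--Lindemann--Weierstrass argument and is essentially correct in spirit. One point that deserves more care is the symmetrization step: you assert that after multiplying conjugate relations and regrouping you can arrange a relation $c_0+\sum_j c_j e^{\gamma_j}=0$ with $c_0\neq 0$ a rational integer and the nonzero $\gamma_j$ forming a union of full Galois orbits. Obtaining rational integer coefficients constant on Galois orbits is indeed a symmetric-function argument, but guaranteeing specifically that the constant term $c_0$ is nonzero is not automatic and usually requires either an additional trick or a different organization of the auxiliary function (many treatments instead use, for each index $i$, a function $f_i$ vanishing to order $p$ at every $\gamma_j$ with $j\neq i$ and to order $p-1$ at $\gamma_i$, and then take the product over $i$, which avoids singling out the exponent $0$). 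Apart from this bookkeeping issue, your plan is the classical one and would go through.
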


We will first deduce from the Lindemann--Weierstrass theorem the following result. 
Given a tuple of complex numbers $\cE:=(\zeta_1,\dots,\zeta_m)$, we let
\begin{equation}\label{eq: alge}
\Alg(\cE):=\{P(X_1,\dots,X_m)\in\Qbar[X_1,\dots,X_m]\,:\,P(\zeta_1,\dots,\zeta_m)=0\}
\end{equation}
denote the ideal of algebraic relations over $\Qbar$ between the coordinates of $\cE$.

\begin{cor}\label{cor: lin ind case}
Let $\alpha_1,\dots,\alpha_n$ be $\bQ$-linearly independent algebraic numbers and set 
$$\cE:=(\cos(\alpha_1),\sin(\alpha_1),\dots,\cos(\alpha_n),\sin(\alpha_n)) \,.$$ Then $\{X_i^2+Y_i^2-1 : 1\leq i\leq n\}$ is a set of generators of $\Alg(\cE)$, viewed as an ideal  
of $\Qbar[X_1,Y_1,\dots,X_n,Y_n]$.
\end{cor}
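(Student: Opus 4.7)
The plan is to replace the trigonometric values $\cos(\alpha_j),\sin(\alpha_j)$ by the exponentials $u_j:=e^{i\alpha_j}$ and $u_j^{-1}=e^{-i\alpha_j}$ via the Euler formulas, and then apply Lindemann--Weierstrass to the $\mathbb{Q}$-linearly independent algebraic numbers $i\alpha_1,\dots,i\alpha_n$. This yields that $u_1,\dots,u_n$ are algebraically independent over $\Qbar$, and hence that the $\Qbar$-algebra they generate together with their inverses is a Laurent polynomial algebra on $n$ variables.

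Concretely, let $R:=\Qbar[X_1,Y_1,\dots,X_n,Y_n]$ and let $J$ be the ideal of $R$ generated by $X_i^2+Y_i^2-1$ for $i=1,\dots,n$. The Pythagorean identity yields $J\subseteq\Alg(\cE)$. For the reverse inclusion, set $S:=\Qbar[U_1^{\pm 1},\dots,U_n^{\pm 1}]$. The first key step is to construct an isomorphism $\Phi:R/J\xrightarrow{\sim}S$, defined on generators by
$$
\Phi(X_j)=\frac{U_j+U_j^{-1}}{2}\,,\qquad \Phi(Y_j)=\frac{U_j-U_j^{-1}}{2i}\,.
$$
That $\Phi$ kills $J$ is the computation $\Phi(X_j^2+Y_j^2-1)=\tfrac{1}{4}\bigl((U_j+U_j^{-1})^2-(U_j-U_j^{-1})^2\bigr)-1=0$; an inverse $\Psi:S\to R/J$ can be defined by $\Psi(U_j)=X_j+iY_j$ and $\Psi(U_j^{-1})=X_j-iY_j$, the relation $U_jU_j^{-1}=1$ being precisely the Pythagorean relation modulo $J$. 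The straightforward verification that $\Phi\circ\Psi$ and $\Psi\circ\Phi$ are the identity then shows $R/J\simeq S$.

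The second step is the application of Lindemann--Weierstrass: since $\alpha_1,\dots,\alpha_n$ are $\bQ$-linearly independent, so are $i\alpha_1,\dots,i\alpha_n$, hence $u_1,\dots,u_n$ are algebraically independent over $\Qbar$. Because each $u_j$ is nonzero, this implies that the evaluation homomorphism $S\to\Qbar[u_1^{\pm 1},\dots,u_n^{\pm 1}]\subseteq\bC$ sending $U_j\mapsto u_j$ is an isomorphism of Laurent polynomial algebras. Composing with $\Phi$, we obtain an injection $R/J\hookrightarrow\bC$ sending $X_j,Y_j$ to $\cos(\alpha_j),\sin(\alpha_j)$ respectively, which is exactly the evaluation map associated with $\cE$. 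Its kernel is $\Alg(\cE)/J$, which therefore vanishes, giving $\Alg(\cE)=J$, as desired.

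The main point where care is needed is the construction of the isomorphism $\Phi$: one must check that $\Psi$ is well-defined, which ultimately boils down to the identity $(X_j+iY_j)(X_j-iY_j)=X_j^2+Y_j^2\equiv 1\pmod{J}$ and the fact that no further relations among the $U_j^{\pm 1}$ need be imposed. Everything else is then a formal consequence of Lindemann--Weierstrass.
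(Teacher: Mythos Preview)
Your proof is correct and takes a genuinely different route from the paper's. The paper argues via dimension theory: it uses Lindemann--Weierstrass to bound $\degtr_{\Qbar}\Qbar(\cE)\ge n$, hence $\rmht(\Alg(\cE))\le n$, and then builds a strict chain $\{0\}\subsetneq\mathfrak p_1\subsetneq\cdots\subsetneq\mathfrak p_n\subseteq\Alg(\cE)$ of prime ideals, proving primality of each $\mathfrak p_k$ by an inductive argument that $R[X,Y]/(X^2+Y^2-1)$ is a domain whenever $R$ is a domain of characteristic $\neq 2$. Your argument bypasses all of this by exploiting the change of variables $U_j=X_j+iY_j$, $U_j^{-1}=X_j-iY_j$, which identifies $R/J$ explicitly with the Laurent polynomial ring $\Qbar[U_1^{\pm1},\dots,U_n^{\pm1}]$; injectivity of the evaluation then follows immediately from the algebraic independence of the $e^{i\alpha_j}$. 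Your approach is shorter and more elementary, and it makes transparent why the answer is what it is (the unit circle is isomorphic to $\mathbb G_m$ over a field containing $i$). The paper's approach, on the other hand, uses more portable machinery (Krull dimension, height) and its integrality lemma is stated over any domain of characteristic $\neq 2$, so it does not rely on having $i$ in the coefficient field; this would matter if one wanted an analogue over~$\bQ$ rather than~$\Qbar$.
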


This corollary will probably come as no surprise to specialists and may seem obvious to them, but as this note is intended for a wide audience 
we have chosen to give a detailed proof. To this end, we begin by recalling some basic notions of commutative algebra, for which we refer the reader to 
\cite[Chap.\ 8]{Bo}. 

Let $R$ be a unitary commutative ring. We say that a chain of prime ideals of $R$ of the form ${\displaystyle {\mathfrak {p}}_{0}\subsetneq {\mathfrak {p}}_{1}\subsetneq \ldots \subsetneq {\mathfrak {p}}_{n}}$ has length $n$. The Krull dimension of $R$, denoted by $\dim R$,   is defined as the supremum of the lengths of all chains of prime ideals. For example, if $\mathbb K$ is a field, then $\dim \mathbb K[X_1,\ldots,X_n]=n$. Given a prime ideal $\mathfrak p$ of $R$, the height of 
$\mathfrak p$,  denoted by $\rmht(\mathfrak p)$, is defined as the supremum of the lengths of all chains of prime ideals contained in 
$\mathfrak p$. 

Let $\mathbb K$ be a field and $\mathbb L$ be a field extension of $\mathbb K$.  A subset $S$ of $\mathbb L$ is said to be algebraically independent 
over $\mathbb K$, if the elements of $S$ do not satisfy any non-trivial polynomial equation with coefficients in $\mathbb K$. 
The transcendence degree of $\mathbb L$ over $\mathbb K$, denoted by $ {\rm tr.deg}_{\mathbb K}\mathbb L$, is then defined as the 
maximal cardinality among all algebraically independent subsets  $S\subseteq \mathbb L$. 

Let $A$ be a finitely generated integral $\mathbb K$-algebra, say $A:=\mathbb K[a_1,\ldots,a_r]$. Then, the Noether normalization lemma implies that 
\begin{equation}\label{eq: dim}
\dim A = {\rm tr.deg}_{\mathbb K}\mathbb K(a_1,\ldots,a_r) \leq r \,.
\end{equation}
Furthermore, if $\mathfrak p$ is a prime ideal of $A$ then 
\begin{equation}\label{eq: dimq}
\dim A/\mathfrak p = \dim A -\rmht(\mathfrak p)\,. 
\end{equation}

\begin{proof}[Proof of Corollary \ref{cor: lin ind case}]
By Euler's formula, we have  
$$
e^{i\alpha}= \cos(\alpha) +i\sin(\alpha)\,,
$$
for all complex numbers $\alpha$. It follows that the complex numbers $e^{i\alpha_1},\dots,e^{i\alpha_n}$ all belong to 
$$\Qbar[\mathcal E]=\Qbar[\cos(\alpha_1),\sin(\alpha_1),\dots,\cos(\alpha_n),\sin(\alpha_n)]\,.$$
Since the numbers $i\alpha_1,\dots,i\alpha_n$ are linearly independent over $\bQ$, the Lindemann--Weierstrass theorem implies that the numbers $e^{i\alpha_1},\dots,e^{i\alpha_n}$ are algebraically independent over $\Qbar$. We deduce that 
\begin{equation}\label{eq: tre}
 {\rm tr.deg}_{\Qbar}\mathbb \Qbar(\mathcal E)  \geq n \,.
\end{equation}

The $\Qbar$-algebra $\Qbar[\cE]$  is isomorphic to
$
\Qbar[X_1,Y_1,\dots,X_n,Y_n]/\Alg(\cE)$. 
Since $\Qbar[\cE]$ is a finitely generated integral $\Qbar$-algebra,  Equalities \eqref{eq: dim} and \eqref{eq: dimq} yield 
\begin{align*}
    \degtr_{\Qbar}\Qbar(\cE) &= \dim\Qbar[\cE]\\
    &= \dim \Qbar[X_1,Y_1,\dots,X_n,Y_n]/\Alg(\cE)\\
    &= 2n - \rmht(\Alg(\cE))\,,
\end{align*}
where the last equality holds because $\Alg(\cE)$ is a prime ideal. It follows from \eqref{eq: tre} that 
\begin{equation}\label{eq: height bound}
\rmht(\Alg(\cE))\leq n\,.
\end{equation}

For every $k$, $1\leq k\leq n$, we let $\mathfrak p_k$ denote the ideal of $\Qbar[X_1,Y_1,\dots,X_n,Y_n]$ spanned by the polynomials $X_i^2+Y_i^2-1$, 
$1\leq i\leq k$.  Thus, we want to prove that  $\mathfrak p_n= \Alg(\cE)$. 
By the Pythagorean identity \eqref{eq: pyt}, we have the ascending chain
$$
\{0\}\varsubsetneq \mathfrak p_1\varsubsetneq\cdots\varsubsetneq \mathfrak p_n\subseteq \Alg(\cE)\,.
$$
By \eqref{eq: height bound}, if all these ideals are prime, we obtain that $\mathfrak p_n=\Alg(\cE)$, as wanted. 
It thus remains to prove that $\mathfrak p_k$ is a prime ideal for all $k\in\{1,\ldots,n\}$. 

To that purpose, we first prove the following claim: 

\medskip

\emph{if $R$ is an integral domain of characteristic $\not=2$, then $R[X,Y]/(X^2+Y^2-1)$ is also an integral domain.}

\medskip

Indeed, let $P,Q\in R[X,Y]$ be such that $PQ\in(X^2+Y^2-1)$. Substituting $Y^2$ by $1-X^2$ in $P$ and $Q$, we see that there exist polynomials $P_0,P_1,Q_0,Q_1 \in R[X]$ such that we have $P\equiv P_0 + P_1 Y$ and $Q\equiv Q_0+Q_1 Y$ modulo $(X^2+Y^2-1)$. It follows that
$$
0\equiv PQ\equiv P_0Q_0+P_1Q_1(1-X^2)+(P_0Q_1+Q_0P_1)Y\mod (X^2+Y^2-1)\,,
$$
which yields $P_0Q_0+P_1Q_1(1-X^2)=P_0Q_1+Q_0P_1=0$. We thus obtain the linear system
$$
\left(\begin{array}{cc}
    Q_0 & Q_1(1-X^2) \\
    Q_1 & Q_0
\end{array}\right)\left(\begin{array}{c}
     P_0  \\
      P_1
\end{array}\right)=0\,.
$$
Hence either $P_0=P_1=0$ and then $P\in(X^2+Y^2-1)$, or $Q_0^2-Q_1^2(1-X^2)=0$. In the latter case, we thus have 
$$
Q_0^2=Q_1^2(1-X^2)\,, 
$$
but, since the characteristic of $R$ is not equal to $2$, the valuation at $(1-X)$ is even on the left-hand side and odd on the right-hand side, 
unless $Q_0=Q_1=0$. 
We deduce that $Q_0=Q_1=0$ and hence $Q\in (X^2+Y^2-1)$. 
This proves our claim.

Let $R$ be an integral domain and $\mathfrak p$ be an ideal of $R$.
Then the extended ideal $\mathfrak p^e$ in $R[X_1,\ldots,X_r]$ is defined as the ideal of $R[X_1,\ldots,X_r]$ 
generated by the elements of $\mathfrak p$, that is 
the set of polynomials in the variables $X_1,\ldots,X_r$ whose coefficients belong to $\mathfrak p$. This definition implies that 
\begin{equation}\label{eq: ideale}
\left(R/\mathfrak p\right)[X_1,\ldots,X_r] \cong R[X_1,\ldots,X_r]/\mathfrak p^e\,.
\end{equation}
If $\mathfrak p$ is prime, we have that $R/\mathfrak p$ is integral and 
hence $(R/\mathfrak p)[X_1,\ldots,X_r]$  is integral too, and then by \eqref{eq: ideale},  $\mathfrak p^e$ is also prime. 
Furthermore, if $\mathfrak p$ and $\mathfrak q$ are two ideals of $R$, then 
\begin{equation}\label{eq: quot}
(R/\mathfrak p)/\overline{\mathfrak q}\cong R/(\mathfrak p+\mathfrak q)\,,
\end{equation}
where $\overline{\mathfrak q}$ is the image of the ideal $\mathfrak p + \mathfrak q$ by the natural projection from $R$ to $R/\mathfrak p$\footnote{We recall that there is a one-to-one correspondence between the ideals of 
$R/\mathfrak p$ and the ideals of $R$ that contain $\mathfrak p$.}. 

Now, let $\mathfrak q_k$ denote the ideal of $R_k:=\Qbar[X_1,Y_1,\ldots,X_k,Y_k]$ spanned by the polynomials $X_i^2+Y_i^2-1$, $1\leq i\leq k$. Let us 
prove that these ideals are all prime by induction on $k$.  
Using the claim, we obtain that $R_1/\mathfrak q_1$ is an integral domain of characteristic zero and hence $\mathfrak q_1$ is prime. 
Now, let $k<n$ and let us assume that $\mathfrak q_k$ is prime. 
We infer from \eqref{eq: ideale} and \eqref{eq: quot} that 
\begin{align*}
(R_k/\mathfrak q_k)[X_{k+1},Y_{k+1}] /(X_{k+1}^2+Y_{k+1}^2-1)  &\cong \left(R_{k+1}/\mathfrak q_k^e\right)/(X_{k+1}^2+Y_{k+1}^2-1)\\ 
&\cong  R_{k+1}/\mathfrak q_{k+1}\,.
\end{align*}
Since $\mathfrak q_k$ is prime, we get that $R_k/\mathfrak q_k$ is an integral domain of characteristic zero, and the claim ensures that the left-hand side is an integral domain. We deduce that $R_{k+1}/\mathfrak q_{k+1}$  is an integral domain and hence 
$\mathfrak q_{k+1}$ is prime.  Thus, all the $\mathfrak q_k$ are prime. 
Since $\mathfrak p_k$ is the extended ideal of $\mathfrak q_k$ in $\mathbb K[X_1,Y_1,\ldots,X_n,Y_n]$, it follows that 
all the $\mathfrak p_k$ are prime, as desired.  
\end{proof}

\subsection{Proof of Theorem \ref{theo: main}.}

We are now ready to deduce our main result. 

\begin{proof}[Proof of Theorem \ref{theo: main}] 
The map $\ev$ is clearly surjective, so we only have to prove that $\ker(\ev) =\kI$. 
We have already observed that $\kI\subseteq\ker(\ev)$, so now we just have to prove the reverse inclusion. Let $P\in \mathcal A$ be such that $P\in\ker(\ev)$ and let  us prove that $P\in\kI$.   

We define the support of a polynomial $Q\in\mathcal A=\Qbar[X_\alpha,Y_\alpha : \alpha\in\Qbar]$ 
as the smallest set $\mathcal S_1\times \mathcal S_2\subseteq\Qbar\times\Qbar$ such that $Q$ can be written as a polynomial in the variables $X_\alpha$,  $\alpha \in \mathcal S_1,$ and $Y_\beta$, $\beta\in\mathcal S_2$, meaning that the support is the intersection of all such finite sets.  
We let $S:=\{\alpha_1,\ldots,\alpha_n\} \subseteq\Qbar$ be a finite set such that the support of $P$ is included in $\mathcal S\times \mathcal S$. 

Let $k$ denote the dimension of the $\mathbb Q$-vector space $V$ generated by the elements of $\mathcal S$. 
Reordering if necessary, we can assume without any loss of generality that $\alpha_1,\dots,\alpha_k$ is a basis of $V$.  Then there exists a positive integer $d$ such that  
all the elements of $\mathcal S$ belong to the $\bZ$-module spanned by the numbers $\alpha_i/d$, $1\leq i\leq k$. By Lemma \ref{lem: parity}, we have the following relations modulo $\kI$: 
$$
\left\{\begin{array}{ccccc}
X_{\alpha+\beta} & \equiv & X_\alpha X_\beta -Y_\alpha Y_\beta\, ,\\
Y_{\alpha+\beta} & \equiv & Y_\alpha X_\beta+X_\alpha Y_\beta\, ,\\
X_{\alpha-\beta} & \equiv & X_\alpha X_\beta +Y_\alpha Y_\beta\,,\\
Y_{\alpha-\beta} & \equiv & Y_\alpha X_\beta-X_\alpha Y_\beta\,.
\end{array}\right.
$$
 These congruences show that, for every $\gamma$ in $S$, there exist polynomials $P_\gamma$ and $Q_\gamma$ whose support is included in  
 $\{\alpha_1/d,\dots,\alpha_k/d\}^2$ and such that $X_\gamma\equiv P_\gamma$ and $Y_\gamma\equiv Q_\gamma$ modulo $\kI$. It follows that there exists a polynomial $U$ with support in $\{\alpha_1/d,\dots,\alpha_k/d\}^2$ such that  $P\equiv U\mod \kI$. Since $P\in\ker(\ev)$ and $\kI\subseteq\ker(\ev)$, 
 it follows that $U\in\ker(\ev)$ too.
 Since the numbers $\alpha_i/d$, $1\leq i\leq k$, are $\bQ$-linearly independent, Corollary~\ref{cor: lin ind case} implies that $U$ belongs to the ideal 
 spanned by the polynomials $X_{\alpha_i/d}^2+Y_{\alpha_i/d}^2-1$, $1\leq i\leq k$. We deduce that $U\in\kI$ and hence $P\in\kI$, as desired.
\end{proof}

\section{Finding explicit relations.}\label{sec: algo}

Theorem \ref{theo: main} provides the \emph{raison d'\^{e}tre} of the algebraic relations over $\Qbar$ between the values of sine and cosine at algebraic points: they all come from the fundamental identities \eqref{eq: pyt} and \eqref{eq: add}. We now show that, given algebraic numbers $\alpha_1,\dots,\alpha_n$, they can be used to explicitly find generators of the ideal of the algebraic relations $\Alg(\cE)$ defined in \eqref{eq: alge}, where 
$$
\cE:=(\cos(\alpha_1),\sin(\alpha_1),\dots,\cos(\alpha_n),\sin(\alpha_n))\,.
$$
More precisely, we describe an algorithm to compute a set of generators of the ideal $\Alg(\cE)$. Observe that Theorem \ref{theo: main} does not imply that the ideal spanned by the polynomials \eqref{eq: I generators} with support in $\{\alpha_1,\dots,\alpha_n\}$ is equal to $\mathrm{Alg}_{\Qbar}(\cE)$. However, as described below, only a finite number of polynomials of the form \eqref{eq: I generators} are needed to span this ideal. 

Our input is a finite set of algebraic numbers  $\alpha_1,\dots,\alpha_n$. We assume that algebraic numbers are given by their minimal integer polynomial and a  rational approximation precise enough to separate it from the other roots of this polynomial. 

\subsubsection*{First step.} The data concerning the $\alpha_i$'s provide an explicit bound on their naive height (\emph{i.e.}, the maximum of the absolute values of the coefficients of their minimal integer polynomial). We can thus use Siegel lemma \cite{Si} to compute   
an explicit integer $M$ such that if there is a $\bQ$-linear relation between some of the $\alpha_i$'s, then one exists whose coefficients have height at most $M$. 
Then we can use  the algorithm given in \cite{Ju} to find a basis 
$(\beta_1,\dots,\beta_k)\subseteq \{\alpha_1,\ldots,\alpha_n\}$ of  the $\bQ$-vector space $\Span_\bQ(\alpha_1,\dots,\alpha_n)$, and, for each $i\in \{1,\ldots, n\}$, 
an expression of $\alpha_i$ as  
a $\bQ$-linear combination of the $\beta_j$, $1\leq j\leq k$. Let $d$ be the $\lcm$ of the denominator of the coefficients of these linear combinations, then 
we can explicitly find rational integers $a_{i,j}$ such that 
$$
\alpha_i=a_{i,1}\frac{\beta_1}{d}+\cdots+a_{i,k}\frac{\beta_k}{d}\,,\quad 1\leq i\leq n\,.
$$

\subsubsection*{Second step.}
Let us fix an integer $i$ in $\{1,\dots,n\}$. Assume that $a_{i,1}>0$. Then writing $\alpha_i = (\alpha_i - \beta_1/d) + \beta_1/d$ and using the angle addition formulas, one can write both 
$\cos(\alpha_i)$ and $\sin(\alpha_i)$ as polynomials in $\cos(\alpha_i -\beta_1/d)$, $\sin(\alpha_i -\beta_1/d)$, $\cos(\beta_1/d)$, and $\sin(\beta_1/d)$. 
Using this procedure $a_{i,1}-1$ times, we end up with 
expressions of $\cos(\alpha_i)$ and $\sin(\alpha_i)$ as polynomials in $\cos(a_{i,2}\frac{\beta_1}{d}+\cdots+a_{i,k}\frac{\beta_k}{d})$,  $\sin(a_{i,2}\frac{\beta_1}{d}+\cdots+a_{i,k}\frac{\beta_k}{d})$,  $\cos(\beta_1/d)$, and $\sin(\beta_1/d)$. 
If $a_{i,1}$ is negative, one just replaces the angle addition formulas by the angle difference formulas. 
Then one can use the same procedure with $a_{i,2}$ and so on. 
In the end, we can explicitly find polynomials $P_i$ and $Q_i$ in $\mathbb{Z}[W_1,Z_1,\dots,W_k,Z_k]$, $1\leq i\leq n$, such that, for every $i\in\{1,\dots,n\}$, one has
$$
\cos(\alpha_i)=P_i\left(\cos\left(\frac{\beta_1}{d}\right),\sin\left(\frac{\beta_1}{d}\right),\dots,\cos\left(\frac{\beta_k}{d}\right),\sin\left(\frac{\beta_k}{d}\right)\right)
$$
and
$$
\sin(\alpha_i)=Q_i\left(\cos\left(\frac{\beta_1}{d}\right),\sin\left(\frac{\beta_1}{d}\right),\dots,\cos\left(\frac{\beta_k}{d}\right),\sin\left(\frac{\beta_k}{d}\right)\right)\,.
$$

\subsubsection*{Third step.} At this point, the polynomials $$P_i, Q_i\in\mathbb Z[W_1,Z_1,\ldots,W_k,Z_k]\,, 
\quad  1\leq i\leq n\, ,$$ have been computed. 
Now consider the tuple $\cF$ of complex numbers formed by the concatenation of $\cE$ and
$$
\left(\cos(\beta_1/d),\sin(\beta_1/d),\dots,\cos(\beta_k/d),\sin(\beta_k/d)\right)\,.
$$
We claim that the ideal $\Alg(\cF)$ of $\Qbar[X_1,Y_1,\dots,X_n,Y_n,W_1,Z_1,\dots,W_k,Z_k]$ is generated by the polynomials $X_i-P_i$, $Y_i-Q_i$, $1\leq i\leq n$, and $W_j^2+Z_j^2-1$, $1\leq j\leq k$. Let $\mathfrak p$ denote the ideal generated by these polynomials. Clearly, $\mathfrak p\subseteq \Alg(\cF)$. 
Let $P\in\Alg(\cF)$.  Since $X_i\equiv P_i$ and $Y_i\equiv Q_i$ modulo $\mathfrak p$, there exists a polynomial $R\in\Qbar[W_1,Z_1,\dots,W_k,Z_k]$ such that $P\equiv R\mod \mathfrak p$. Since $\beta_1/d,\dots,\beta_k/d$ are $\bQ$-linearly independent, Corollary~\ref{cor: lin ind case} implies that $R$ belongs to the ideal spanned by the polynomials $W_j^2+Z_j^2-1$, $1\leq j\leq k$. It follows that $R\in \mathfrak p$ and hence $P\in \mathfrak p$. We deduce that $\Alg(\cF)=\mathfrak p$, as claimed. 

The polynomials $X_i-P_i$, $Y_i-Q_i$, $1\leq i\leq n$, and $W_j^2+Z_j^2-1$, $1\leq j\leq k$ 
form a set of generators of  $\Alg(\cF)$, so we can consider a monomial order eliminating the variables $W_j$ and $Z_j$, $1\leq j\leq k$, and 
use Buchberger's algorithm to find an associated Gr\"{o}bner basis $\cG$ of $\Alg(\cF)$, and hence  
$\cG\cap\Qbar[X_1,Y_1,\dots,X_n,Y_n]$ is a Gr\"obner basis of  $\Alg(\cE)$ (see, for example, \cite{BW}). 
Then the output of our algorithm is simply $\cG\cap\Qbar[X_1,Y_1,\dots,X_n,Y_n]$, which provides an explicit set of generators of the ideal  $\Alg(\cE)$, as wanted.
 
\section{Concluding remarks.}

In this final section, we place our results in a broader context, with a few references to more advanced work that motivated the writing of this note.

Transcendental number theory has two main driving forces. Given a set of complex numbers $\Xi$, one wishes, on the one hand, to be able to determine, 
for all $\xi_1,\ldots,\xi_r\in\Xi$, the algebraic relations over $\Qbar$ between these numbers, and, on the other, to find the 
\emph{raison d'\^{e}tre} for these putative relations. These two problems are naturally linked, and the second, more ambiguous, depends of course on how the elements of $\Xi$ are defined. To make our point a little clearer, we begin by briefly recalling three famous conjectures. Solving the first two would answer this second problem in the case of the ring of periods,\footnote{We recall that a period is a complex number whose real and imaginary parts 
are values of absolutely convergent integrals of rational functions with rational coefficients, over domains in $\mathbb R^n$ given by polynomial inequalities with rational coefficients. Many classical mathematical constants such as the algebraic numbers, $\pi$, $\log 2$, and $\zeta(3)$ are periods. } while solving the third  would answer it in the case of (normalized) values of the Euler Gamma function at rational points. 

In the case where $\Xi$ denotes the ring of periods, a conjecture due to 
Kontsevich and Zagier \cite{KZ} predicts that any algebraic relation 
between periods can be derived from the fundamental rules of integral calculus--additivity, change of variables and the Stokes formula.  
We point out to the interested reader that this conjecture is essentially equivalent to the famous Grothendieck period conjecture, although the latter is expressed in terms too elaborate to be defined 
here.\footnote{It involves a more geometric definition of periods associated with algebraic varieties, their (Betti) homology and de Rham cohomology, 
and their motivic Galois group.} An in-depth discussion of these two conjectures and their links can be found in  \cite{Fre22}.

In the second case, we choose $$\Xi:=\left\{\frac{\Gamma(r)}{\sqrt{2\pi}} : r\in\mathbb Q\right\}\,,$$ where $\Gamma$ is the Euler gamma function. Then the Rohrlich--Lang conjecture predicts that all algebraic relations come from specializations of standard functional relations associated with $\Gamma$ (see, for example, \cite[Conjecture 22]{Miw}). 

In addition to sharing a common ambition, these three famous conjectures have in common the fact that they are considered to be totally beyond the reach of current methods.  It is therefore remarkable that the two main objectives we just described  have recently been achieved in a few cases. Namely, when $\Xi$ is: 

\begin{itemize}
\item[](a) the set of values at algebraic points of Siegel $E$-functions (see \cite{AF24,FR19}); 
\item[](b) the set of values at algebraic points of Mahler $M$-functions (see \cite{AF24,AF20}).
\end{itemize}

A Siegel $E$-function is defined as a power series of the form 
$$
f(z)=\sum_{n=0}^\infty \frac{a_n}{n!} z^n \,,
$$
where $(a_n)_{n\geq 0}$ is a sequence of algebraic numbers satisfying some arithmetic growth condition and such that $f(z)$ satisfies a linear differential equation with coefficients in $\Qbar[z]$ (see \cite{Si} for a precise definition). These functions can be seen as generalizations of the exponential function, which corresponds to the case where $(a_n)_{n\geq 0}$ is the constant function equal to $1$. The Siegel-Shidlovskii theorem (see \cite{Sh_Liv}) is a fundamental result concerning the algebraic relations between values of $E$-functions at nonzero algebraic points that provides a broad generalization of the Lindemann-Weierstrass theorem.  
An important refinement of the Siegel-Shidlovskii theorem was obtained by Beukers in \cite{Be06} and 
it was recently observed in \cite{AF24} that it can be used to prove that all algebraic relations over $\Qbar$ between values of $E$-functions at nonzero algebraic points have a \emph{functional} origin. More concretely, if  $f_1(z),\ldots,f_r(z)$ are $E$-functions whose values at a nonzero algebraic point 
$\alpha$ satisfy a polynomial relation with algebraic coefficients, then this relation can be obtained as the specialization at $\alpha$ of a polynomial relation with coefficients in $\Qbar[z]$ between the functions $f_1(z),\ldots,f_r(z)$ and a finite number of their successive derivatives. 
In this context, this result provides a satisfactory answer to the second problem, while it can also be used to solve the first one as also explained in \cite{AF24}. Indeed, it allows us to prove that given a finite number of values (at nonzero algebraic points) of $E$-functions, there exists an algorithm to find an explicit basis of the ideal of the algebraic relations over 
$\Qbar$ between these numbers. 
The existence of such an algorithm was first proved in \cite{FR19} in a slightly different way. 

A Mahler $M$-function is defined as a power series 
$$
f(z)=\sum_{n\geq 0}a_nz^n \,,
$$
where $a_n\in\Qbar$ and such that $f(z)$ satisfies a linear difference equation with coefficients in $\Qbar[z]$ of the form 
$$
a_0(z)f(z)+a_1(z)f(z^q)+\cdots+a_m(z)f(z^{q^m})=0\,,
$$ 
where $q\geq 2$ is an integer.  
The results obtained in \cite{AF24,AF20} in the case of Mahler $M$-functions are similar to those obtained in the case of $E$-functions, the role played by the derivative $\frac{d}{dz}$ being replaced by the Mahler operators $\sigma_q: z\mapsto z^q$.

This note was inspired by these recent  results. Indeed, Theorem~\ref{theo: main} and the result in Section~\ref{sec: algo} solve the two problems in the case where 
$\Xi=\left\{\cos(\alpha),\sin(\alpha) : \alpha\in\Qbar\right\}$. Since the functions sine and cosine are $E$-functions,  this is a special case of (a), but one for which we can provide an elementary proof and also a concrete and beautiful description of the algebraic relations.  In the same vein, a similar result could be obtained for the set $\Xi=\{J_\lambda(\alpha) : \alpha\in\Qbar,\, \lambda \in\mathbb Q\setminus \mathbb Z_{< 0}\}$ using the theorem proved by Siegel in his famous memoir \cite{Si} concerning the algebraic independence of values at algebraic points of the Bessel functions 
$$J_\lambda(z)=\sum_{n=0}^\infty \frac{(-1)^n}{n! \Gamma(n+\lambda+1)} \left(\frac{z}{2}\right)^{2n+\lambda}\,.$$



\begin{thebibliography}{1}

\bibitem{AF24} Adamczewski, B., Faverjon, C. (2023) Relations alg\'ebriques entre valeurs de $E$-fonctions ou de $M$-fonctions, 
 preprint, arXiv:2303.05997 [math.NT],  to appear in \emph{C. R. Math. Acad. Sci. Paris}.   
   
\bibitem{AF20} Adamczewski, B., Faverjon, C. (2020). Mahler's method in several variables and finite automata, 
preprint, arXiv :2012.08283 [math.NT],  to appear in \emph{Ann. of Math.}.   



\bibitem{BW} Becker, T., Weispfenning, V. (1993).
Gr\"{o}bner bases. A computational approach to commutative algebra, \emph{Graduate Texts in Mathematics} 141, Springer-Verlag, New York.

\bibitem{Be06} Beukers,  F. (2006). A refined version of the Siegel--Shidlovskii theorem, \emph{Annals of Math.}  
163, 369--379. 

\bibitem{Bo} Bourbaki, N. (2006). \emph{\'El\'ements de math\'ematique. Alg\`ebre commutative. Chapitres 8 et 9}, Springer, Berlin. 

\bibitem{FR19} Fischler, S., Rivoal, T. (2023). Effective algebraic independence of values of E-functions, \emph{Math. Z.} 305, Paper No. 48, 17 pp. 

\bibitem{Fre22}  Fres\'an J. (2022). Une introduction aux p\'eriodes.  
\emph{P\'eriodes et transcendance. Journ\'ees math\'ematiques X-UPS 2019}, \'Editions de l'\'Ecole polytechnique, Palaiseau.

\bibitem{Ju} Just, B. (1990). Integer relations among algebraic numbers, 
\emph{Math. Comp.} 54, 467--477.

\bibitem{KZ} Kontsevicth, M., Zagier, D. (2001). Periods, \emph{Mathematics unlimited--2001 and beyond}, Springer-Verlag, Berlin, p. 771--808.

\bibitem{Sh_Liv} Shidlovskii, A.B. (1989), Transcendental numbers, \emph{De Gruyter Studies in Mathematics} 12, Walter de Gruyter \& Co., Berlin. 

\bibitem{Si} Siegel, C. L. (1929). \"{U}ber einige Anwendungen diophantischer Approximationen, \emph{Abh. Preu\ss. Akad. Wiss., Phys.-Math. Kl.} 1, 1--70.


\bibitem{Miw} Waldschmidt, M. (2006). Transcendence of periods: the state of the art, \emph{Pure Appl. Math. Q.} 2, 435--463. 

\end{thebibliography}
\end{document}